\newtheorem{thm}{Theorem}[section]
\newtheorem{cor}[thm]{Corollary}
\newtheorem{prop}[thm]{Proposition}
\newtheorem{quest}[thm]{Question}
\newtheorem*{rem*}{Remark}
\theoremstyle{definition}
\newtheorem{defn}[thm]{Definition}
\newtheorem{example}[thm]{Example}
\newtheorem{rem}[thm]{Remark}
\numberwithin{equation}{section}
\title{Wirtinger systems of generators of knot groups}
\thanks{MSC codes: 57M25, 57M27, 57M05.}
\thanks{The first, third and fourth authors were partially supported by NSF grant DMS-1247679.}
\author{R. Blair, A. Kjuchukova, R. Velazquez, P. Villanueva}
\begin{document}
\setlength{\parindent}{0pt}

\begin{abstract} We define the {\it Wirtinger number} of a link, an invariant closely related to the meridional rank. The Wirtinger number is the minimum number of generators of the fundamental group of the link complement over all meridional presentations in which every relation is an iterated Wirtinger relation arising in a diagram. We prove that the Wirtinger number of a link equals its bridge number. This equality can be viewed as establishing a weak version of Cappell and Shaneson's Meridional Rank Conjecture, and suggests a new approach to this conjecture. Our result also leads to a combinatorial technique for obtaining strong upper bounds on bridge numbers. This technique has so far allowed us to add the bridge numbers of approximately 50,000 prime knots of up to 14 crossings to the knot table. As another application, we use the Wirtinger number to show there exists a universal constant $C$ with the property that the hyperbolic volume of  a prime alternating link $L$ is bounded below by $C$ times the bridge number of $L$.
  \end{abstract}
\maketitle

\section{Introduction}

This work was inspired by an old problem, 1.11 on the Kirby List~\cite{kirby1995problems}:

\begin{quest}
\label{MRC}
Is every knot whose group is generated by $2$ meridians actually an $2$-bridge knot? Same for $n$ meridians and $n$-bridge knots. 
\end{quest}

The above question has become known as the Meridional Rank Conjecture, and has been answered in the affirmative for many classes of links. The case of $n=2$ was settled in 1989 by Boileau and Zimmermann~\cite{BZ89}. The conjecture has also been shown to hold  for generalized Montesinos links~\cite{BZ85},~\cite{LM93}, torus links~\cite{RZ87}, iterated cables~\cite{CH14}, links of meridional rank 3 whose double branched covers are graph manifolds~\cite{BJW} and knots whose exteriors are graph manifolds~\cite{BDJW}. There are no known counter-examples, and the general case remains open.

The present work was inspired by the following simple observation about the conjecture. Denote the bridge number and meridional rank of a link $L$ by $\beta(L)$ and $\mu(L)$, respectively. Let us recall the classical argument which establishes $\beta(L)\geq\mu(L)$. Assume $\beta(L)=m$. Then, $L$ admits a diagram with exactly $m$ local maxima $x_1, ...,  x_m$ (with respect to some axis in the plane). The Wirtinger generators corresponding to the $m$ arcs containing the $x_i$ are then easily seen to generate the group of the link complement, by applying the Wirtinger relations in this diagram successively at crossings of decreasing height.

What is obvious yet intriguing about this argument is that it does not directly compare the bridge number to the number of meridional generators in a presentation of the link group in which arbitrary valid relations are allowed. Rather, only very particular, diagrammatic, relations are considered. This motivates studying the intermediate link invariant which arises by, intuitively speaking, considering only presentations with the property that the generators are  meridional elements and the relations are Wirtinger relations that can simultaneously be realized in a diagram.

To formalize this notion, we introduce the combinatorial tool of coloring a link diagram according to the following set of rules. Recall that if $L$ is a link in $\mathbb{R}^3$ and $p:\mathbb{R}^3\rightarrow \mathbb{R}^2$ is the standard projection map given by $p(x,y,z)=(x,y)$, then $p(L)$ is a \textit{link projection} if $p|_L$ is a regular projection. Hence a link projection is a finite four-valent graph in the plane, and we refer to the vertices of this graph as crossings. A \textit{link diagram} is a knot projection together with labels at each crossing that indicate which strand goes over and which goes under. By standard convention, these labels take the form of deleting parts of the under-arc at every crossing, and thus we think of a link diagram as a disjoint union of closed arcs, or \emph{strands}, in the plane, together with instructions for how to connect these strands to form a union of simple closed curves in $\mathbb{R}^3$.

Let $D$ be a diagram of a link $L$ with $n$ crossings.
Denote by $s(D)$ the set of strands $s_1$, $s_2$,..., $s_n$ and let $v(D)$ denote the set of crossings $c_1$, $c_2$,..., $c_n$. Two strands $s_i$ and $s_j$ of $D$ are {\it adjacent} if $s_i$ and $s_j$ are the under-strands of some crossing in $D$. There exists a unique knot diagram up to planar isotopy for which there exists a strand $s_i$ of $D$ for which $s_i$ is adjacent to itself, see Figure \ref{onecrossing.fig}. In all cases we consider, adjacent arcs are understood to be distinct.

\begin{figure}
	\includegraphics[width=2in]{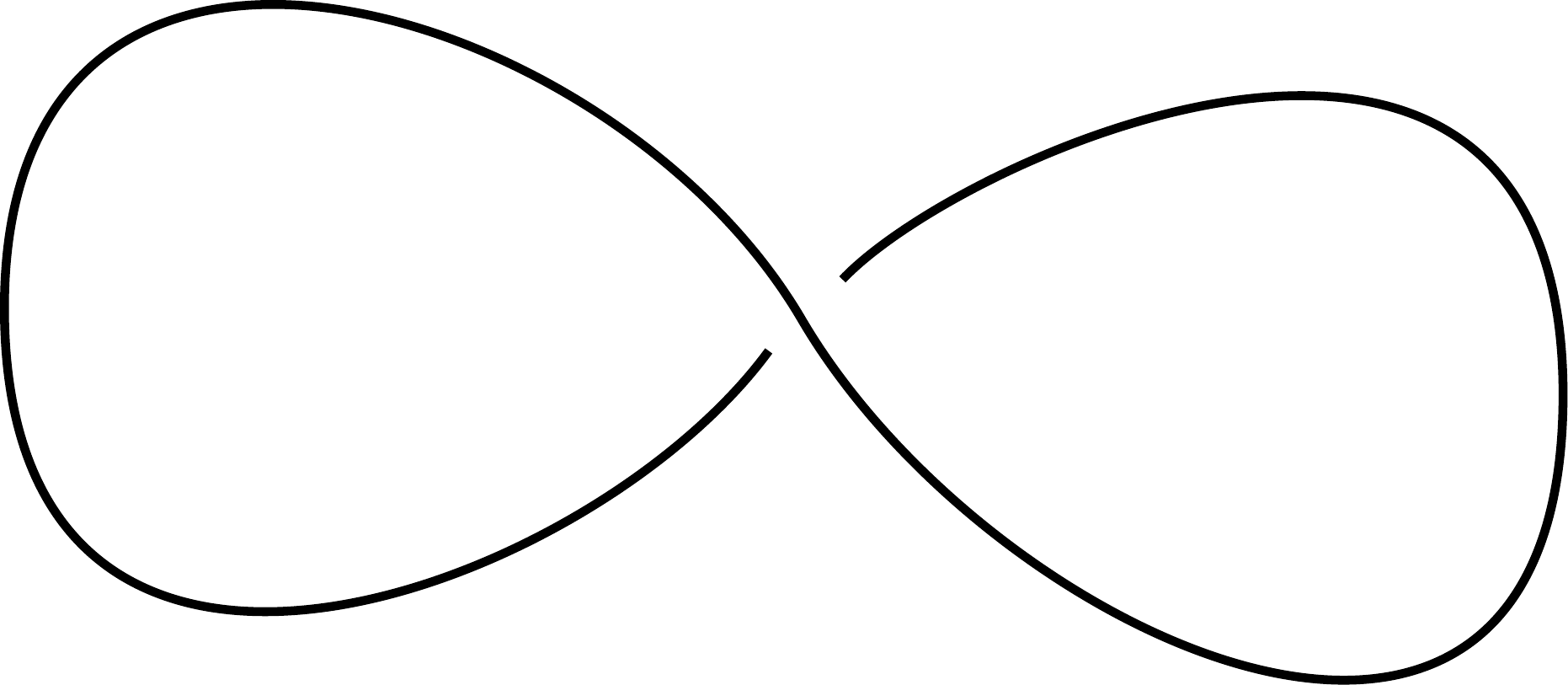}
	\caption{The only knot diagram in which a strand is adjacent to itself.}
	\label{onecrossing.fig}
	\end{figure}

We call $D$ \textit{k-partially colored } if we have specified a subset $A$ of the strands of $D$ and a function $f: A \to \{1, 2, \dots, k\}$.  We refer to this partial coloring by the tuple $(A, f)$. Given $k$-partial colorings $(A_1, f_1)$ and $(A_2, f_2)$ of $D$, we say $(A_2, f_2)$ is the result of a \textit{coloring move } on $(A_1, f_1)$ if
\begin{enumerate}
    \item $A_1 \subset A_2$ and $A_2 \setminus A_1 = \{s_j\}$ for some strand $s_j$ in $D$;
    \item $f_2|_{A_1} = f_1$;
    \item $s_j$ is adjacent to $s_i$ at some crossing $c \in v(D)$, and $s_i\in A_1$;
    \item the over-strand $s_k$ at $c$ is an element of $A_1$;
    \item $f_1(s_i) = f_2(s_j)$. 
\end{enumerate}
Denote the above coloring move from one k-partially colored diagram to another by $(A_1, f_1)\to(A_2, f_2)$. See Figure \ref{coloring.fig}.\footnote{Thanks to Patricia Cahn for creating Figure~2.} The move $(A_1, f_1)\to(A_2, f_2)$ captures the fact that if the Wirtinger generators corresponding to $s_i$ and $s_k$ belong to the subgroup of $\pi_1(S^3-L, x_0)$ generated by some set of meridians of $L$, then, by applying the Wirtinger relation at $c$, $s_j$ is seen to belong to this subgroup as well.

We say $D$ is \textit{k-meridionally colorable} if there exists a $k$-partial coloring $(A_0, f_0) = (\{s_{i_1}, s_{i_2}, \dots, s_{i_k}\}$, $f_0(s_{i_j}) = j)$ and a sequence of $c(D) - k$ coloring moves $(A_0, f_0)\to(A_1, f_1)\to\;\dots\;\to(A_{c(D) - k}, f_{c(D) - k})$, where $c(D)$ denotes the crossing number of $D$. In particular, $A_{c(D) - k}=s(D)$, that is, at the end of the coloring process every strand is assigned a color.  By design, the set $\{s_{i_1}, s_{i_2}, \dots, s_{i_k}\}$  corresponds to meridional elements that generate the link group via iterated application of the Wirtinger relations in $D$, so we refer to it as a {\it Wirtinger generating system}, and we call its elements  \textit{seed strands}. The minimum value of $k$ such that $D$ admits a Wirtinger generating system with $k$ elements (equivalently, $D$ is $k$-meridionally colorable) is the {\it Wirtinger number} of $D$, denoted $\omega(D)$. Of course, this number will depend on the choice of diagram, but it can be used to define an invariant of $L$.

\begin{figure}
	\includegraphics[width=4in]{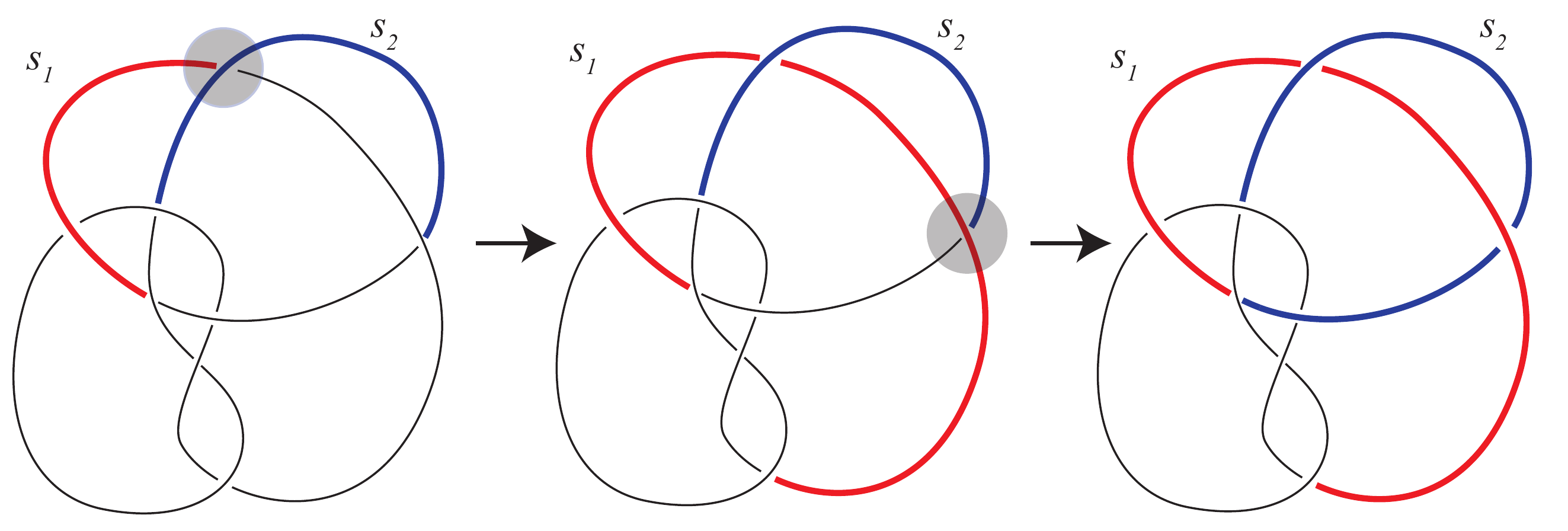}
	\caption{Two coloring moves on the knot $8_{17}$, corresponding to the shaded crossings. The coloring process terminates at this stage. More generally, this diagram can not be colored using only two seeds. $8_{17}$ is a three-bridge knot.}
	\label{coloring.fig}
	\end{figure}

\begin{defn}
\label{omega}  Let $L\subset S^3$ be a link. The {\it Wirtinger number} of $L$, denoted $\omega(L)$, is the minimal value of $\omega(D)$ over all diagrams $D$ of $L$.
\end{defn}

It is easy to see that $\omega(L)$ has the property $\mu(L)\leq\omega(L)\leq\beta(L)$. The first inequality follows from the fact that a Wirtinger generating system is by definition a meridional generating set. The second one is implied by the classical argument relating bridge number to meridional rank. Our main result is to show that  this inequality is in fact an equality.

\begin{thm}[Main Theorem]
\label{main}
Let $L\subset S^3$ be a link. The {\it Wirtinger number} and the bridge number of $L$ are equal.
\end{thm}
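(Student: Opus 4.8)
The inequality $\omega(L)\le\beta(L)$ is already in hand, so the entire content of the theorem is the reverse inequality $\beta(L)\le\omega(L)$. My plan is to fix a diagram $D$ realizing $\omega(D)=\omega(L)=k$, together with a Wirtinger generating system $\{s_{i_1},\dots,s_{i_k}\}$ and a completing sequence of coloring moves $(A_0,f_0)\to\cdots\to(A_{c(D)-k},f_{c(D)-k})$, and to convert this combinatorial data into an explicit embedding of $L$, isotopic to the original, whose height function has exactly $k$ local maxima. Since the bridge number is the minimum number of such maxima over all positions of $L$, this yields $\beta(L)\le k=\omega(L)$.

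First I would record the combinatorial shape of a coloring. A strand is an under-strand only at its two endpoints, where it meets its two neighbors along $L$; hence two strands are adjacent precisely when they are consecutive along the link, and (away from the degenerate diagram of Figure~\ref{onecrossing.fig}) the adjacency graph of $D$ is a disjoint union of cycles, one per component. Because every coloring move extends the colored region to a cyclically adjacent strand, each of the $k$ color classes is a contiguous arc of strands, so the seeds cut $L$ into $k$ arcs $\alpha_1,\dots,\alpha_k$, each containing exactly one seed. Within a fixed arc the color propagates outward from its seed in both directions, and the order in which the moves are applied endows the arc with a monotone \emph{time} that increases with distance from the seed.

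The plan is then to read this as a Morse picture. I would place each seed as the unique maximum of its arc and let the time function govern a height that decreases monotonically as one travels away from the seed along the arc in either direction; the strands at which consecutive arcs abut become the $k$ minima. Each arc thereby becomes a single \emph{tent} with its seed at the peak, and the result is a candidate $2k$-plat presentation with exactly $k$ maxima. What remains is to realize the over/under information of $D$ in this plat, and here condition (4) is exactly the hypothesis that makes the construction run: processing the crossings in the order prescribed by the coloring sequence, when a strand $s_j$ is colored across a crossing $c$ whose over-strand $s_k$ is, by condition (4), already colored, the strand $s_k$ is already present and descending from above, so $s_j$ can be routed beneath $s_k$ and continued downward without introducing a new maximum; dually, at the crossings where $s_j$ is itself the over-strand the relevant under-strands are introduced later and pass beneath it, again creating no new maxima.

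The step I expect to be the main obstacle is the global consistency of these local routings. One must show that the downward continuation of $s_j$ can be carried out simultaneously for both directions of propagation out of each seed and across the junctions between neighboring arcs, so that no combination of routings ever forces an arc to turn back upward and create a spurious maximum. I would make this precise by an induction on the coloring sequence carrying a structural invariant for the partially built configuration---for example, that the colored strands form an embedded, monotonically descending forest meeting the current frontier in the expected order---and by verifying that condition (4) is exactly what is needed to preserve this invariant at each move. Once the invariant survives the final move, the completed embedding is isotopic to $L$, has its maxima only at the $k$ seeds, and the theorem follows.
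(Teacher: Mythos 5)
Your overall strategy coincides with the paper's: turn the coloring sequence into a Morse embedding in which the $k$ seeds are the only local maxima and the color-change crossings carry the minima, and your preliminary observations (color classes are contiguous arcs; the coloring time increases monotonically away from each seed) are exactly Proposition~\ref{properties}(1)--(2). The genuine gap is that the step you yourself flag as ``the main obstacle'' is precisely the nontrivial content of the theorem, and your sketch of it---an induction carrying an unspecified ``structural invariant'' on a partially built embedding---is not an argument. The paper resolves this point differently and concretely: it defines the height function $h(\alpha_j)=-j$ from the coloring sequence once and for all, embeds each strand horizontally at its assigned height, and then proves the purely combinatorial Proposition~\ref{properties}(3): at every crossing whose two under-strands receive the same final color, the over-strand $s_r$ satisfies $h(s_r)>\min\{h(s_p),h(s_q)\}$, which is exactly what permits a monotone under-arc joining $\hat{s}_p$ to $\hat{s}_q$ beneath $\hat{s}_r$. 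This fact does \emph{not} follow locally from condition (4) of the coloring move, because both under-strands at a crossing may become colored without the color ever being passed across that crossing; the paper's proof of (3) must invoke connectivity of the color class together with connectivity of the knot to derive a contradiction. Relatedly, your ``dually'' claim---that at crossings where the newly colored strand is the over-strand, the under-strands ``are introduced later''---is false in general: a seed, or a strand colored earlier from another direction (possibly in another color), can be an under-strand there. The construction survives because at different-color crossings the connecting arc is routed down to a level below every strand, creating the intended minimum; no ordering claim about those under-strands is needed or available.

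The proposal also silently assumes $L$ is a knot. For links Proposition~\ref{properties}(3) genuinely fails---the Hopf link violates it, as the paper notes---so any invariant of the form you describe cannot be preserved in general. The paper handles this by proving the weaker Proposition~\ref{propertieslinks}(3), which allows exactly one exceptional crossing per monochromatic component (that crossing then houses the unique minimum of the component in Case 3 of Step II), and by an induction on the number of components to dispose of cut-split diagrams via Remark~\ref{shortcut}, using $\beta(L)=\beta(L\setminus U)+1$ and $\omega(D)=\omega(D')+1$. Without the combinatorial lemma in the knot case and without the cut-split/monochromatic analysis in the link case, your argument does not close.
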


An immediate consequence of this result is that it provides a novel approach to computing bridge numbers of links. Although the question of finding the minimum of $\omega(D)$ over all diagrams $D$ of a given link $L$ is subtle, calculating $\omega(D)$ itself is algorithmic. This has allowed us to implement the calculation of $\omega(D)$ in Python. In the Appendix, we outline our algorithm for computing $\omega(D)$ from a Gauss code for $D$. The algorithm runs extremely fast in practice. We have used this computational approach to complete the tabulation of bridge number for all prime knots up to 12 crossings and the vast majority of knots with crossing number 13 and 14, thereby adding the bridge numbers of approximately 50, 000 knots to the knot table. Due to the fact that we can not assume that $\omega(L)$ is always realized in a minimal diagram of $L$ -- although this turns out to be the case for all prime knots of 12 crossings or less -- tabulating these bridge numbers requires proving that the upper bounds provided by the Wirtinger number are sharp. Our argument to this effect is presented in Section~\ref{app}, together with a discussion of our findings relating $\omega(L)$ to $\omega(D)$ where $D$ is a minimal diagram of $L$.

In the second place, the Wirtinger number allows us to relate $\beta(L)$ to other diagrammatic link invariants, such as the {\it twist number}. Recall that in the sphere of projection containing the link diagram, a {\it twist region} is either a maximal collection of bigons in the knot projection stacked end to end or a neighborhood of a crossing which is not contained in any bigon. The integer $t(D)$ denotes the number of twist regions of $D$. Lackenby~\cite{L04} showed that if a hyperbolic link has a prime alternating diagram $D$, then the hyperbolic volume of that link is bounded above and below by linear functions of $t(D)$. We can elevate $t(D)$ to a link invariant by declaring that $t(L)$ be equal to the minimum of $t(D)$ over all diagrams $D$ of $L$. We obtain:

\begin{cor}
\label{twist}
Given a link $L$, $\beta(L)\leq 2 t(L)$.
\end{cor}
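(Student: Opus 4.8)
The plan is to derive this from the Main Theorem together with the diagram-level bound $\omega(D)\le 2t(D)$, which holds for every diagram $D$: choosing $D$ so that $t(D)=t(L)$ then gives $\beta(L)=\omega(L)\le\omega(D)\le 2t(D)=2t(L)$. (Strictly, the corollary needs a mild hypothesis --- a split unknot component adds to $\beta$ but not to $t$ --- so I would prove it for $L$ admitting a diagram in which every component carries a crossing, e.g.\ $L$ non-split and nontrivial, and suppress this point.) To obtain $\omega(D)\le 2t(D)$ I will exhibit a Wirtinger generating system for $D$ with at most $2t(D)$ seeds. Enumerate the twist regions $R_1,\dots,R_t$. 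Each $R_i$ can be isotoped in the plane to a vertical stack of crossings $c^i_1,\dots,c^i_{k_i}$ in which consecutive crossings cobound a bigon, with two strands entering from the top and two leaving at the bottom; at the top crossing $c^i_1$, one of the two incoming strands is the over-strand and the other is one of the two under-strands. Declare, for each $i$, these two strands to be seeds; this yields a set $S$ of at most $2t(D)$ strands.

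The heart of the argument is a local statement: \emph{if the over-strand $o_1$ at the top crossing $c_1$ of a twist region $R$ with crossings $c_1,\dots,c_k$ is colored, and so is one of the under-strands at $c_1$, then a sequence of coloring moves supported on $c_1,\dots,c_k$ colors every strand incident to $R$.} I would prove this by an induction running down the region. Write $o_j$ for the over-strand at $c_j$ and, for $j\ge 1$, let $b_j$ be the under-strand at $c_j$ that is born there, i.e.\ the piece continuing downward. The structural fact I need, forced by the bigon between $c_j$ and $c_{j+1}$, is that the two arcs joining $c_j$ to $c_{j+1}$ are sub-arcs of $o_j$ and of $b_j$; hence the two strands meeting at $c_{j+1}$ are $o_j$ and $b_j$, one of them being $o_{j+1}$ and the other an under-strand at $c_{j+1}$, so if both are colored the move at $c_{j+1}$ colors the remaining under-strand $b_{j+1}$. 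The base case at $c_1$ colors $b_1$, using that $o_1$ and the incoming under-strand are colored; running down to $c_k$ colors $b_1,\dots,b_k$. A bookkeeping check shows $o_2,\dots,o_k$ all lie among $\{o_1,b_1,\dots,b_{k-1}\}$, so the strands incident to $R$ are exactly $o_1$, the incoming under-strand, and $b_1,\dots,b_k$: a total of $k+2$, all now colored; in particular the two strands leaving the bottom of $R$, namely $o_k$ and $b_k$, are colored. If some strand on this list was already colored while an earlier region was processed, the corresponding move is simply dropped, and the induction goes through unchanged since the strands it invokes at each stage remain colored.

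The global coloring is then immediate: $S$ is colored at time $0$; processing $R_1,\dots,R_t$ in turn, each region's two seeds lie in $S$ and so are colored, so the local statement colors every strand incident to that region. Every strand of $D$ terminates by passing under at some crossing, that crossing lies in a twist region, and the strand is one of the strands incident to that region; hence once all regions are processed every strand of $D$ is colored. Therefore $D$ is $|S|$-meridionally colorable, giving $\omega(D)\le |S|\le 2t(D)$, and the corollary follows from the Main Theorem. I expect the main obstacle to be the local statement: one must get the strand combinatorics of a twist region exactly right --- the count of $k+2$ incident strands, the identification of the pair of under-strands at each $c_{j+1}$ with $o_j$ and $b_{j+1}$, and the degenerate cases ($k=1$, or small diagrams where two of the four boundary strands of a region coincide) --- and verify that dropping already-colored strands never interrupts the chain of moves.
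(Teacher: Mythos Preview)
Your argument is correct and follows the same route as the paper: choose a diagram $D$ with $t(D)=t(L)$, exhibit a Wirtinger generating system of size at most $2t(D)$ by seeding near the twist regions, and invoke the Main Theorem.

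The only real difference is in the choice of seeds. The paper takes as seeds \emph{all} strands not properly contained in a twist region; since each such strand joins two (not necessarily distinct) twist regions, there are at most $2t(D)$ of them, and every twist region then has all four of its boundary strands colored from the outset. This makes the propagation step essentially a one-liner: with both ends of a stack of bigons already colored, the interior strands fall immediately. You instead seed with only the two strands entering the \emph{top} of each region, which is a subset of the paper's seed set, and in exchange you must carry out the one-directional induction down each region that you describe. Your local analysis (that $o_{j+1}$ and one under-strand at $c_{j+1}$ are always among $\{o_j,b_j\}$) is exactly what is needed to make this work, so the tradeoff is a slightly smaller seed set for a longer local argument. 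Your remark that a split unknotted component breaks the inequality as literally stated is also well taken; the paper does not address this edge case.
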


Corollary~\ref{twist} has an immediate application to the the study of hyperbolic volumes of links. Closed 3-manifolds and link complements with a complete hyperbolic structure can be assigned a well-defined hyperbolic volume. The Heegaard genus of a closed 3-manifold $M$, denoted $g(M)$, is the minimum genus of any Heegaard surface for that manifold. Due to a theorem of Jorgensen and Thurston, there exists a constant $C$ such that if $M$ is a closed hyperbolic 3-manifold, then $Cg(M)\leq vol(M)$, where $vol(M)$ denotes the hyperbolic volume of $M$. Bridge number can be regarded as the analogue of Heegaard genus in the world of links. Recently, it was shown that there does not exist a $C$ such that for any hyperbolic link $C\beta(L)\leq vol(L)$ where $vol(L)$ is the hyperbolic volume of the complement of $L$~\cite{BTZ},~\cite{CFKNP}. It is a challenging open question to establish for what classes of links the analogue of Jorgensen and Thurston's theorem holds. As a consequence of Corollary~\ref{twist} and the main result from~\cite{L04}, we prove the following analogue of Jorgensen and Thurston's theorem for prime alternating hyperbolic links:

\begin{thm}
\label{twist2}
 There exists a universal constant $C$ with the property that every prime alternating hyperbolic link $L$ satisfies the inequality $C\beta(L)< vol(L)$.
\end{thm}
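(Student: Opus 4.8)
The plan is to derive Theorem~\ref{twist2} by feeding the diagrammatic bound $\beta(L)\le 2t(L)$ of Corollary~\ref{twist} into Lackenby's lower volume bound for alternating links~\cite{L04} (in the sharpened form coming from the appendix of that paper by Agol and D.~Thurston). Fix a prime alternating hyperbolic link $L$. Since $L$ is prime and alternating it admits a reduced alternating diagram $D$, and, $L$ being prime, such a diagram is automatically prime; so Lackenby's theorem applies to $D$. Because $t(L)$ is the minimum of $t(D')$ over all diagrams $D'$ of $L$, we have $t(L)\le t(D)$, whence Corollary~\ref{twist} gives
\[
\beta(L)\ \le\ 2\,t(L)\ \le\ 2\,t(D).
\]
Lackenby's theorem provides a universal constant $a>0$ with $vol(L)\ge a\bigl(t(D)-2\bigr)$ whenever $D$ is a prime alternating diagram of a hyperbolic link $L$; the precise value of $a$ is irrelevant in what follows. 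Note also that a prime alternating diagram of a hyperbolic link has at least two twist regions, since otherwise $L$ would be a $(2,n)$-torus link or an unlink, which are not hyperbolic.

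Now I would argue by cases on the size of $t(D)$. If $t(D)\ge 3$, then because $t\mapsto (t-2)/t$ is increasing on $[3,\infty)$,
\[
\frac{vol(L)}{\beta(L)}\ \ge\ \frac{a\,(t(D)-2)}{2\,t(D)}\ \ge\ \frac{a}{6}.
\]
If instead $t(D)=2$, then $\beta(L)\le 2t(D)=4$ by the previous display, while $vol(L)\ge v_0$, where $v_0>0$ is a universal lower bound for the volume of a hyperbolic link complement (such a bound exists by work of Thurston, Jorgensen and Gromov; explicitly one may take $v_0=vol(S^3\setminus 4_1)$ by Cao--Meyerhoff), so that $vol(L)/\beta(L)\ge v_0/4$. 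Setting
\[
C\ =\ \tfrac12\,\min\Bigl\{\tfrac{a}{6},\ \tfrac{v_0}{4}\Bigr\}\ >\ 0,
\]
in either case $C<vol(L)/\beta(L)$, and multiplying through by $\beta(L)\ge 1$ gives the strict inequality $C\beta(L)<vol(L)$ asserted in Theorem~\ref{twist2}, with $C$ independent of $L$.

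I expect the only real obstacle to be the degeneracy of Lackenby's bound when $t(D)\le 2$: since $vol(L)\ge a(t(D)-2)$ is vacuous there, it cannot on its own control the infinitely many (but all two-bridge) prime alternating hyperbolic links that admit a two-twist-region diagram, which is why the argument must be supplemented by a uniform positive lower bound on volumes of hyperbolic $3$-manifolds to handle that case. The rest is bookkeeping; in particular the proof uses only the trivial inequality $t(L)\le t(D)$ and not the deeper fact (a consequence of the Tait flyping conjecture) that $t(D)$ is independent of the chosen reduced alternating diagram of $L$.
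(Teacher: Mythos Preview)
Your proof is correct and follows essentially the same approach as the paper: combine Corollary~\ref{twist} with Lackenby's lower volume bound, then invoke the Cao--Meyerhoff minimum-volume result to handle the degenerate case where Lackenby's inequality is vacuous. The only differences are cosmetic---you argue by cases on $t(D)$ whereas the paper packages the two inputs into a single $\max$ inequality, and you leave the constants abstract while the paper computes an explicit value $C=\tfrac{1}{6}v_3$.
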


\section{Proof of the main theorem}

Let $D$ be a $k$-meridionally colorable diagram of some link $L$.  Our proof strategy will be to construct from $D$ a Morse embedding of $L$ into $\mathbb{R}^3$ with exactly $k$ local maxima. This will be carried out in two steps. First, we will study the process of extending a partial coloring of $D$ across the entire diagram. The purpose is to extract geometric information about the link $L$ from the sequence of coloring moves. Secondly, we will use the information obtained to construct the desired embedding.
It will prove useful to record the order in which strands are colored, as follows.

\begin{defn}
\label{height} Suppose $D$ is link diagram with crossing number $c(D)$. Assume $D$ can be $k$-meridionally colored by starting with a Wirtinger generating system $\{s_{i_1}, s_{i_2}, \dots, s_{i_k}\}$ and performing coloring moves $(A_0, f_0)\to(A_1, f_1)\to\;\dots\;\to(A_{c(D) - k}, f_{c(D) - k})$.  We associate to this succession of moves the \textit{coloring sequence} $\{\alpha_j\}_{j = 1}^{c(D)}$ given by $\alpha_j = s_{i_j}$ for $1 \leq j \leq k$ and $\alpha_j \in A_{j-k}\setminus A_{j - (k+1)}$ for $k + 1 \leq j \leq c(D)-k$. Furthermore, given a coloring sequence $\{\alpha_j\}$ we define its {\it height function} $h: s(D)\to \mathbb{Z}$ by $h(\alpha_j):=-j$.
\end{defn}

Introducing the negative sign here serves merely to indulge the authors' mild preference for focusing on local maxima, rather than local minima, in our construction. We also remark that any diagram $D$ of a  non-trivial link can give rise to a multitude of distinct coloring sequences. When a collection of seeds suffices to extend a partial coloring across all of $D$, the order in which moves are performed involves making arbitrary choices; the color a strand attains can also vary depending on the chosen order. However, once a succession of coloring moves is chosen, the associated coloring sequence is unique.

We review a couple of terms used in the proof of the next proposition. Let $A$ be some subset of $s(D)$.  We say the strands of $A$ are \textit{connected} if there exists a reordering of the strands $s_{i_1}, s_{i_2}, \dots, s_{i_a}$ in $A$ such that $s_{i_{j}}$ is adjacent to $s_{i_{j + 1}}$ for all $j, 1 \leq j \leq a - 1$.  Note the set of all strands in $D$ is connected if $K$ is a knot. In this case, if $A=s(D)$, then $s_{i_a}$ is adjacent to $s_{i_1}$.

Secondly, let $\{s_i\}^n_{i = 1}$ be a sequence of adjacent strands ordered by adjacency and let $g: \{s_1, s_2, \dots, s_n\} \to \mathbb{R}$ be a one-to-one map.  We say $g$ has a \textit{local maximum} at $s_j$ if the function $g': \{1, 2, \dots, n\} \to \mathbb{R}$ defined by $g'(i):=g(s_i)$ has a local maximum at $j$.

We now summarize the relevant properties of the functions $f_i$ and $h$. Because links require some additional considerations, we begin by studying the case of knots.

\begin{prop} \label{properties} Let $D$ be a diagram of a non-trivial knot. Assume $D$ can be $k$-meridionally colored via $(A_0, f_0)\to\dots\;\to(A_n, f_n)$, where $n={c(D) - k}$, and let $\{\alpha_m\}_{m = 1}^{c(D)}$ and $h$ be as above. The following hold:
\begin{enumerate}
\item For every $j \in \{1, 2, \dots, k\}, \delta \in \{0, 1, \dots, n\}, f^{-1}_\delta(j)$ is connected.
\item For any $j \in \{1, 2,\dots,k\}$, $h$ has a unique local maximum on $\{s_i|~s_i\in f^{-1}_n(j)\}$ when this set is ordered sequentially by adjacency. 
\item Let $s_p, s_q\in s(D)$ be adjacent understrands at a crossing in $D$, and denote the overstrand at this crossing by $s_r$. If $f_n(s_p) = f_n(s_q)$, then $h(s_r) > \min\{h(s_p), h(s_q)\}$.
\end{enumerate}
\end{prop}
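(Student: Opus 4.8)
The plan is to analyze the coloring process by induction on the number of coloring moves, tracking how the color classes $f_\delta^{-1}(j)$ grow and how the height function records the order. Throughout, I keep in mind the combinatorial content of a coloring move: when $s_j$ is newly colored at a crossing $c$ with under-strand $s_i$ and over-strand $s_k$, we have $s_i, s_k \in A_{\delta}$ already, $s_j$ receives the color of $s_i$, and $h(s_j) < h(s_i)$, $h(s_j) < h(s_k)$ (because $s_i$ and $s_k$ were colored strictly earlier, so their $h$-values, being $-(\text{earlier index})$, are strictly larger).

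For part (1), I would argue by induction on $\delta$. The base case $\delta = 0$ is immediate: each $f_0^{-1}(j)$ is a single strand, hence connected. For the inductive step, passing from $(A_{\delta-1}, f_{\delta-1})$ to $(A_\delta, f_\delta)$ adds a single strand $s_j$ to exactly one color class, say class $c = f_{\delta-1}(s_i)$; by condition (3) of a coloring move, $s_j$ is adjacent to $s_i \in f_{\delta-1}^{-1}(c)$. So the new color class $f_\delta^{-1}(c) = f_{\delta-1}^{-1}(c) \cup \{s_j\}$ is obtained from a connected set by attaching a strand adjacent to an element of it, which is again connected; all other color classes are unchanged. This gives (1) for all $\delta$, in particular for $\delta = n$.

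For part (2), fix a color $j$ and consider $f_n^{-1}(j)$, which by (1) can be ordered by adjacency as $s_{t_1}, \dots, s_{t_a}$. Existence of a local maximum of $h$ on a finite linearly ordered set is automatic; the point is uniqueness. I would prove this by the following sharper inductive claim: at every stage $\delta$, for every color $j$, the set $f_\delta^{-1}(j)$ — ordered by adjacency — has a unique $h$-local maximum, and moreover that maximum is attained at the \emph{seed strand} $s_{i_j}$. The base case is clear. For the step, when $s_j$ is added to color class $c$ adjacent to $s_i$, note $s_j$ is the most recently colored strand so far, hence $h(s_j)$ is the minimum $h$-value among all colored strands; in particular $h(s_j) < h(s_i)$. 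The new strand $s_j$ sits at one end or is inserted next to $s_i$ in the adjacency ordering of the color class; since its height is lower than that of its neighbor(s) in the class, adding it creates no new local maximum and does not destroy the old one at the seed. The one subtlety to check carefully is the combinatorial geometry of how $s_j$ attaches to the adjacency-ordered path $f_{\delta-1}^{-1}(c)$: a priori $s_j$ could be adjacent to $s_i$ where $s_i$ is in the interior of that path, which would seem to create a branch. Here I would invoke the structure of a knot diagram — each strand has exactly two under-crossings, hence at most two same-colored neighbors within its color class — to conclude that $s_i$, if interior to the path, already has its two color-class neighbors accounted for, so $s_j$ cannot in fact attach there; thus $s_j$ attaches at an endpoint of the path, and the ordering extends. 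This is the step I expect to be the main obstacle: making precise why the color classes stay \emph{path-like} rather than branching, using the four-valence of crossings and the exclusion of the degenerate one-crossing diagram of Figure~\ref{onecrossing.fig}.

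For part (3), let $s_p, s_q$ be adjacent under-strands at a crossing with over-strand $s_r$, and suppose $f_n(s_p) = f_n(s_q) =: j$. One of $s_p, s_q$ was colored strictly before the other; say without loss of generality $s_q$ is colored later, i.e. $h(s_q) = \min\{h(s_p), h(s_q)\}$. At the moment $s_q$ receives its color it does so via some coloring move at some crossing $c'$; I claim we may take $c' = c$. Indeed, since $s_p$ and $s_q$ both end up with color $j$ and $s_p$ is already colored when $s_q$ gets colored, the crossing $c$ is available as a site for the move coloring $s_q$ — and even if the chosen move used a different crossing, the relevant fact is only that $s_q$ could be colored from $s_p$ at $c$, which requires the over-strand $s_r$ at $c$ to already be colored, i.e. $h(s_r) > h(s_q)$. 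To nail this down rigorously, I would instead argue directly: consider the first time, in the coloring sequence, that both $s_p$ and $s_q$ are colored — that is the moment $s_q$ is colored — and observe that the move coloring $s_q$ is performed at a crossing whose over-strand is already colored; combining this with the observation that in a valid coloring sequence the existence of the later-colored under-strand at $c$ forces, via condition (4) of the coloring move applied at $c$ (which is a legitimate move once $s_p$ is colored), the over-strand $s_r$ to have been colorable — hence colored — before $s_q$. The cleanest formulation: among $\{s_p, s_q, s_r\}$, the strand $s_q$ (the later of the two under-strands) is not the last to be colored, because the move that colors it, read at crossing $c$, requires $s_r \in A$ already. Hence $h(s_r) > h(s_q) = \min\{h(s_p), h(s_q)\}$, as desired. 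I expect part (3) to be comparatively routine once the bookkeeping of "which move colors $s_q$" is set up carefully.
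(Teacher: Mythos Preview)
Your arguments for parts (1) and (2) are essentially the paper's proofs. The digression about ``branching'' in (2) is unnecessary --- since $D$ is a knot diagram, the strands form a single cycle under adjacency, so every connected subset is automatically a path (or the full cycle), and the newly colored strand, having minimal $h$-value and being adjacent to an already-colored strand in its color, cannot be a local maximum. That is all the paper uses.

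Part (3), however, has a genuine gap. You repeatedly assume that the coloring move which colors $s_q$ can be taken to occur at the crossing $c$, or equivalently that such a move is ``available'' at $c$. But a coloring move at $c$ requires, by condition (4), that the overstrand $s_r$ already be colored --- which is precisely the conclusion you are trying to establish. Each of your reformulations (``$s_q$ could be colored from $s_p$ at $c$'', ``the move that colors it, read at crossing $c$'') smuggles in this same assumption. The strand $s_q$ is an understrand at exactly \emph{two} crossings, and nothing in your argument rules out the possibility that $s_q$ was colored via the \emph{other} crossing, at which the overstrand is some $s_m \neq s_r$.

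The paper's proof confronts exactly this case. If the move coloring $s_q$ is at the other crossing, then $s_q$ was colored from some $s_l \neq s_p$, and at that stage both neighbors $s_p$ and $s_l$ of $s_q$ already lie in the connected set $f_{i_q-1}^{-1}(j)$ while $s_q$ does not. Because the strands of a knot diagram form a single cycle under adjacency, a connected set containing both neighbors of $s_q$ but not $s_q$ itself must be \emph{all} of $s(D) \setminus \{s_q\}$. This forces $s_r$ (if $s_r \neq s_q$) to already be colored, a contradiction; and if $s_r = s_q$, the entire diagram is monochromatic, contradicting non-triviality. This global use of the cycle structure is the missing idea in your proposal, and it is exactly what fails for links (cf.\ the Hopf link remark following the proposition).
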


\begin{proof}Since $D$ is the diagram of a nontrivial knot, whenever $s_p, s_q\in s(D)$ are adjacent understrands at a crossing in $D$, $s_p\neq s_q$. However, it is possible for the overstrand and an understrand at a crossing of $D$ to be the same strand (i.e. take $D$ to be the result of a type one Reidemeister move that increases crossing number).

{\it (1)} Colloquially, the assertion here is that at every stage $\delta$ of the coloring process, each color in the diagram corresponds to a connected arc of $K$. We verify this claim by induction on $u$, where  $u$ denotes the stage of the coloring process. For $u = 0$, $f^{-1}_0(j) = \{s_{i_j}\}$ is connected. Now assume $f^{-1}_u(j)$ is connected for all $u < t$, and let $u = t$ with $t>0$.  By definition of the coloring move, $\exists s_i \in s(D)$ such that $s_i$ is the unique strand to which a color is assigned at stage $t$. That is, $\{s_i\}= A_t\setminus A_{t - 1}$ and $s_i$ is adjacent to some $s_l \in A_{t-1}$.  Moreover, by definition of the coloring move, $f_t(s_i) = f_{t-1}(s_l)= c$.  Therefore,  $f^{-1}_t(c) = f^{-1}_{t-1}(c) \cup \{s_i\}$ is connected since $ f^{-1}_{t-1}(j)$ is connected by assumption, and $s_i$ is adjacent to a strand in $f^{-1}_{t-1}(c)$. Similarly, by definition of the coloring move, $\forall r\neq c$, $f^{-1}_t(r) = f^{-1}_{t-1}(r)$, which is connected by the inductive hypothesis.

{\it (2)} The statement is that $h$ attains a unique local maximum along each color; in fact, the local maximum  in every color is the seed strand. Intuitively, this follows from the fact that, by the definition of $h$, at every stage $u>0$ of the coloring process, the single strand $\{s_a\}= A_u\setminus A_{u-1}$ has the property that $h(s_a)=\min\{h(s_c)|~ s_c\in A_u\}$, so $s_a$ can not possibly introduce a new local maximum in its color. We formalize this argument by induction on $u$.  At stage $u = 0$, each color $j \in \{1, 2, \dots, k\}$ corresponds only to its seed strand. That is, $f^{-1}_0(j) =\{s_{i_j}\}$, and $h$ trivially attains a single local maximum on this set. Now assume that for $u<t$, $h$ has a unique local maximum on $f^{-1}_u(j)$. Set $u=t$. There exists a strand $\{s_i\}= A_t\setminus A_{t - 1}$ with the property that $s_i$ is adjacent to some $s_l \in A_{t-1}$ and $f_t(s_i)=f_t(s_l)=c$. But $h(s_i)=-t$ and $h(s_l)>-t$, since $s_l$ was colored before stage $t$ of the coloring process. Because $s_i$ and $s_l$ are adjacent, $s_i$ is not a local maximum in $f^{-1}_{t}(c)$ and the number of local maxima in each color remains unchanged.

{\it (3)} This claim can be rephrased by saying that if $s_p$ and $s_q$, two strands adjacent at a crossing, have been assigned the same color, then the over-strand $s_r$ at this crossing cannot have been the last one of the three to attain a color. The intuitive reason is that the definition of the coloring move dictates that $s_r$ must have been assigned a color in order for the coloring to be extended from $s_p$ to $s_q$ or vice-versa. To prove this assertion, let $h(s_c)= - \tau_c$ for $c\in\{p, q, r\}$ and assume for contradiction that $h(s_r) \leq \min\{h(s_p), h(s_q)\}$. Recall that $s_q \neq s_p$. Hence, without loss of generality, we can assume $\tau_p>\tau_q \geq \tau_r$. Denote $f_n(s_p)=f_n(s_q)=j$ and consider ${i_q}$ such that $\{s_q\}= A_{i_q}\setminus A_{i_{q}-1}$. By definition of the coloring move, at stage $\delta_{i_q}$, the color $j$ was extended to the strand $s_q$ from an adjacent strand $s_l$. That is, $\exists s_l\in A_{i_{q}-1}$ such that $s_l$ is adjacent to $s_q$ and $f_{i_{q}-1}(s_l)=j$. By assumption, $\tau_r\leq \tau_q$, so $s_r\notin A_{i_{q}-1}$. 
In particular, since $D$ is the diagram of a non-trivial knot, $s_l\neq s_p$. Moreover, $s_q$ is adjacent to both $s_l$ and $s_p$. Additionally, since $\tau_p>\tau_q$, we have $s_l, s_p\in f_{i_{q}-1}^{-1}(j)$ whereas $s_q\notin  f_{i_q-1}^{-1}(j)$. But we know from  {\it (1)} that $f_{i_{q}-1}^{-1}(j)$ is connected. Thus, since $D$ is the diagram of a knot, $f_{i_{q}-1}^{-1}(j)$ must contain all arcs in $D$ except $s_q$. If $s_r$ and $s_q$ are distinct strands, this contradicts the assumption that $s_r$ has not been colored by stage $i_{q}-1$. If $s_r=s_q$, it follows that, at stage $i_{q}$, the entire diagram is colored and $f(s_i)=j$ for all $i \in \{0, 1, \dots, n\}$. This implies that $\omega(D)=1$, so the meridional rank of $K$ is 1, contradicting our assumption that $D$ is a diagram of a non-trivial knot.

\end{proof}

\begin{rem*} The connectedness of $K$ plays an essential role in the proof of {\it (3)}, and this argument does not generalize without modification to the case of links. In fact, the Hopf link violates (3). \end{rem*}
In order to extend Proposition~\ref{properties} to links, we need to consider links which exhibit the above exception.
\begin{defn}
A link $L$ is $S^3$ is \emph{cut-split} if there exists an unknotted component $U$ of $L$ such that $U$ bounds an embedded disk $B^2$ in $S^3$ with $int(B^2)\cap L=\emptyset$ or $L$ meets $int(B^2)$ transversely in a single point. We call $U$ the \emph{splitting component} of $L$. A link diagram $D$ is \emph{cut-split} if there exists $s_p, s_q\in s(D)$ that are adjacent at some crossing of $D$ such that $s_p= s_q$ or if there exists an element of $s(D)$ that is a simple closed curve.\end{defn}

The standard diagram of the Hopf link is cut-split, with either of the link components as a splitting component. More generally, if $D$ is a cut-split diagram of a link $L$, then $L$ is cut-split. Indeed, a self-adjacent strand of $D$ corresponds to a component of $L$ that bounds a disk $B^2\subset S^3$ whose interior meets $L$ transversely in one or zero (see Figure~\ref{onecrossing.fig}) points. We leave it to the reader to verify the following easy facts about cut-split links and diagrams.

\begin{rem}\label{shortcut} Let $L\subset S^3$ be a link.

\begin{enumerate}
\item If $L$ is cut-split with splitting component $U$, then $\beta(L)=\beta(L\setminus U)+1$.
\item If $D$ is a cut-split link diagram of $L$, $U$ is the splitting component of $L$ that projects to the self adjacent strand $s_p$ or to a simple closed curve, and $D'$ is the the natural diagram of $L\setminus U$ corresponding to the removal of $s_p$ from $D$, then $\omega(D)=\omega(D')+1$.

\end{enumerate}

\end{rem}

\begin{prop} \label{propertieslinks} Let $D$ be a diagram of a link $L$ such that $D$ is not cut-split. Assume $D$ can be $k$-meridionally colored via $(A_0, f_0)\to\dots\;\to(A_n, f_n)$, where $n={c(D) - k}$, and let $\{\alpha_m\}_{m = 1}^{c(D)}$ and $h$ be as above. The following hold:
\begin{enumerate}
\item For every $j \in \{1, 2, \dots, k\}, \delta \in \{0, 1, \dots, n\}, f^{-1}_\delta(j)$ is connected.
\item For any $j \in \{1, 2,\dots,k\}$, $h$ has a unique local maximum on $\{s_i|~s_i\in f^{-1}_n(j)\}$ when this set is ordered sequentially by adjacency. (In the special case when $\{s_i|~s_i\in f^{-1}_n(j)\}$ is the set of all strands corresponding to the projection of a single component of $L$, this set is ordered cyclically by adjacency)

   \item Let $s_p, s_q\in s(D)$ be adjacent understrands at a crossing $c$ in $D$, and denote the overstrand at this crossing by $s_r$. If $f_n(s_p) = f_n(s_q)$, then one of the following holds:
    \begin{enumerate}

    \item $h(s_r) > \min\{h(s_p), h(s_q)\}$,
    \item the set $\{f^{-1}_n(f_n(s_p))\}$ corresponds to the projection of one component $U$ of $L$, and $c$ is the unique crossing incident to $p(U)$ with the property that $h(s_r) \leq \min\{h(s_p), h(s_q)\}$.

    \end{enumerate}

\end{enumerate}
\end{prop}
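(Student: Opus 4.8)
The plan is to follow the proof of Proposition~\ref{properties} as closely as possible, isolating the one place where connectedness of the ambient object is genuinely used and replacing that step. The observation used throughout is that adjacent strands lie on a common component of $L$: at a crossing the under-arc is a single sub-arc of one component, and its two halves are the two adjacent understrands there. Hence any connected subset of $s(D)$ lies in the strands of a single component, and in particular a color class never spreads off the component of its seed. Given this, parts (1) and (2) need almost no new argument. For (1), the induction on the coloring stage in the proof of Proposition~\ref{properties}(1) invokes only the definition of a coloring move, never connectivity, so it carries over verbatim. For (2), the same induction works: the strand colored at any stage $u>0$ attains the strictly smallest $h$-value among all strands colored so far, hence among its color class, and is adjacent to a strand of the same color already present, so it is never a local maximum and destroys none. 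The only addition is the parenthetical cyclic case: at every stage a color class is a connected subset of the strands of one component $U$, hence a sub-arc of the cyclic word $p(U)$ or all of it, and a short induction shows $h$ is unimodal on such a sub-arc (strictly increasing from one end to the seed and strictly decreasing to the other, since each newly colored strand of that color attaches at an end of the current sub-arc and is strictly lower than everything in it); so when the final move closes $p(U)$ into its full cycle, $h$ still has exactly one local maximum there, the seed.

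The content of the proposition is (3). Assume $f_n(s_p)=f_n(s_q)=j$ and that (a) fails, i.e.\ $h(s_r)\le\min\{h(s_p),h(s_q)\}$; I must deduce (b). Since $D$ is not cut-split, $s_p\ne s_q$; after possibly interchanging the labels $p$ and $q$, which is harmless as the hypothesis and both (a) and (b) are symmetric in them, assume $s_p$ is colored before $s_q$, and let $t$ be the stage at which $s_q$ is colored. Then $s_q$ is colored from an adjacent strand $s_l$ that has color $j$ and lies in $A_{t-1}$, via a coloring move performed at a crossing $c'$; moreover $c'\ne c$, for the move at $c'$ forces the overstrand at $c'$ into $A_{t-1}$, whereas $h(s_r)\le h(s_q)$ forces $s_r\notin A_{t-1}$. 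Now $s_p$ and $s_l$ both lie in $A_{t-1}$ with color $j$, and together they exhaust the neighbors of $s_q$ in the cyclic word $p(U)$ of the component $U$ carrying $s_p$ and $s_q$ (its two distinct neighbors when $U$ has three or more strands; and $s_p=s_l$ with $p(U)=\{s_p,s_q\}$ when $U$ has exactly two). By (1), $f^{-1}_{t-1}(j)$ is connected; being a connected subset of the strands of $U$ that contains every neighbor of $s_q$ but not $s_q$, it equals $p(U)\setminus\{s_q\}$. Hence after stage $t$ the color class $j$ is all of $p(U)$, and since color classes never leave a component, this class equals $p(U)$ at the end as well --- the first assertion of (b).

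For the uniqueness assertion in (b), write $p(U)$ as a cyclic word $t_1,\dots,t_m$ in which $t_i$ and $t_{i+1}$ are the two understrands at a crossing $d_i$ (indices mod $m$); because $D$ is not cut-split, $m\ge 2$, the $d_i$ are pairwise distinct, and $\{d_1,\dots,d_m\}$ is exactly the set of crossings whose two understrands both lie in $p(U)$. By (2) exactly one strand of $U$ is a seed, so the coloring process colors the remaining $m-1$ strands of $U$, each by one coloring move carried out at some $d_i$, and no $d_i$ is used twice (using $d_i$ to color $t_i$ from $t_{i+1}$ and also to color $t_{i+1}$ from $t_i$ would make each of the two appear strictly before the other). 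Hence exactly one crossing $d^\ast\in\{d_1,\dots,d_m\}$ is never used. Whenever $d_i$ is used, say to color an understrand $t'$ from the other understrand $t''$, the coloring move forces the overstrand at $d_i$ to be already colored, so its $h$-value exceeds $h(t')=\min\{h(t'),h(t'')\}$; that is, (a) holds at $d_i$. Since (a) fails at $c$ and $c$ is one of the $d_i$, we must have $c=d^\ast$, so $c$ is the unique crossing incident to $p(U)$ at which (a) fails --- which is exactly (b).

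The conceptual content above is light; the real work lies in checking that the degenerate configurations an arbitrary non-cut-split diagram allows do not break the argument --- principally a component of only two strands joined in a bigon ($m=2$), and a crossing whose overstrand coincides with one of its own understrands (a Reidemeister-I-type kink, which here can occur only as $s_r=s_q$, the case $s_r=s_p$ being excluded by our choice of labels). In each case one must re-verify the claims ``$c\ne c'$'', ``$s_r\notin A_{t-1}$'', ``the $d_i$ are pairwise distinct and exhaust the crossings incident to $p(U)$'', and ``a used crossing satisfies (a)''; this is routine but should be carried out explicitly. I expect this bookkeeping, rather than any single idea, to be the main obstacle.
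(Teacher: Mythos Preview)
Your proof is correct and follows essentially the same route as the paper's: parts (1) and (2) are carried over from the knot case verbatim, and for (3) you argue, as the paper does, that if (a) fails then the color class $j$ at stage $t-1$ already contains both neighbors of $s_q$ in the cycle $p(U)$ and hence, by connectedness, equals $p(U)\setminus\{s_q\}$. The one minor difference is the uniqueness argument in (3)(b): the paper assumes a second crossing $c'$ at which (a) fails, reruns the setup to force $s'_q=s_q$, and then observes that of the two crossings at which $s_q$ is an understrand only one has an uncolored overstrand at stage $i_q$; you instead count, noting that the $m-1$ coloring moves on $U$ use $m-1$ distinct crossings among $d_1,\dots,d_m$ and that (a) holds at every used crossing, so exactly one crossing is left over. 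Both arguments are short and equally valid; yours is arguably cleaner since it avoids rerunning the setup and handles the two-strand component uniformly rather than as a separate case.
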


\begin{proof}
{\it (1)} and {\it (2)} follow without modification from the proof of Proposition \ref{properties} parts {\it (1)} and {\it (2)}.

{\it (3)} First, we reestablish the setup for the proof of Proposition \ref{properties} part {\it (3)}. Let $h(s_c)= - \tau_c$ for $c\in\{p, q, r\}$ and assume that $h(s_r) < \min\{h(s_p), h(s_q)\}$. Recall that $s_q \neq s_p$ since $D$ is not cut-split. Hence, without loss of generality, we can assume $\tau_p>\tau_q \geq \tau_r$. Denote $f_n(s_p)=f_n(s_q)=:j$ and let ${i_q}$ be such that $\{s_q\}= A_{i_q}\setminus A_{i_{q}-1}$. By definition of the coloring move, at stage $\delta_{i_q}$, the color $j$ was extended to the strand $s_q$ from an adjacent strand $s_l$. That is, $\exists s_l\in A_{i_{q}-1}$ such that $s_l$ is adjacent to $s_q$ and $f_{i_{q}-1}(s_l)=j$. By assumption, $\tau_r\leq \tau_q$, so $s_r\notin A_{i_{q}-1}$. 

If $s_l = s_p$, then $s_p$ is the only strand adjacent to $s_l$, and $\{s_p,s_q\}=\{f^{-1}_n(j)\}$ is the set of all strands corresponding to the projection of a single component $U$ of $L$. The projection $p(U)$ is then incident to exactly two crossings, $c$ and $c'$. Moreover, by the definition of coloring move, the overstrand at $c'$ is contained in $A_{i_{q}-1}$. This establishes that situation {\it (b)} described in the proposition holds.

Now assume that $s_l \neq s_p$ and note that $s_q$ is adjacent to both $s_l$ and $s_p$. Additionally, since $\tau_p>\tau_q$, as in the proof of Proposition~\ref{properties} (3), we have $s_l, s_p\in f_{i_{q}-1}^{-1}(j)$ whereas $s_q\notin  f_{i_q-1}^{-1}(j)$. But we know from  {\it (1)} that $f_{i_{q}-1}^{-1}(j)$ is connected. Thus, $f_{i_{q}-1}^{-1}(j)$ must contain all strands corresponding to the projection of a single component $U$ of $L$ except $s_q$. Hence, $i_q$ is the first stage of the coloring process at which every strand of $f_{n}^{-1}(j)$ is colored.

Assume that there exists a second crossing $c'$ such that  $h(s'_r) \leq \min\{h(s'_p), h(s'_q)\}$ where $s'_r$ is the overstrand at $c'$, $s'_p$ and $s'_q$ are adjacent understands at $c'$ and $s'_p, s'_q \in f_{n}^{-1}(j)$ (i.e. $s'_p$ and $s'_q$ are contained in the projection of $U$). By repeating the above argument for the strands incident to the crossing $c'$, at stage $\delta_{i'_q}$, the color $j$ was extended to the strand $s'_q$ from an adjacent strand $s'_l$ and $i'_q$ is the first stage of the coloring process at which every strand of $f_{n}^{-1}(j)$ is colored. Thus, $i'_q=i_q$ and $s'_q=s_q$. By definition, the strand $s_q$ is an understrand at the crossings $c$ and $c'$. Moreover, $s_q$ is an understrand at exactly two crossings, one of which has an uncolored overstrand at stage $i_q$ and one of which has a colored overstrand at stage $i_q$. Since both $c$ and $c'$ have uncolored overstands at stage $i_q$, then $c=c'$.

\end{proof}

\begin{proof}[Proof of the Main Theorem]

We prove the theorem by induction on $N$, the number of components of $L$.  

{\bf Step I.} Let $N=1$, that is, $L=:K$ is a knot. If $K$ is trivial, then $\omega(K)=\beta(K)=1$, so we can assume $K$ is non-trivial. It suffices to show that, if $K$ admits a diagram $D$ which is $k$-meridionally colorable, then $\beta(K)\leq k$.  We use Proposition~\ref{properties} to construct from $D$ a smooth embedding of $K$ in $\mathbb{R}^3$ with exactly $k$ local maxima.

We begin by embeding $D$ in the plane $z=-c(D)-2$ in $\mathbb{R}^3$. By assumption, $D$ can be $k$-meridionally colored via some succession of coloring moves $(A_0, f_0)\to\dots\;\to(A_n, f_n)$, where $n={c(D) - k}$, Let $\{\alpha_m\}_{m = 1}^{c(D)}$ be the associated coloring sequence and let and $h$ be its height function, as in Definition~\ref{height}. Note that the range of $h$ is the set $\{-1, -2, \dots, -c(D)\}$.

Next, embed a copy, denoted $\hat{s}_i$, of each strand $s_i$ of $D$ in the plane $z = h(s_i)$ in such a way that the orthogonal projection of $\mathbb{R}^3$ to the plane $z=-c(D)-2$ maps $\hat{s}_i$ to ${s}_i$. We call  $\hat{s}_i$  the {\it lift} of  ${s}_i$. In what follows, we show that the strands $\hat{s}_i$ can be connected in such a way that the resulting knot has $D$ as the diagram of its projection to the plane  $z=-c(D)-2$. That is, we construct arcs $s_{ij}$ in $\mathbb{R}^3$ connecting the lifts  $\hat{s}_i$ and  $\hat{s}_j$ of adjacent strands $s_i$, $s_j$ of $D$, in such a way that $(\cup_{r=1}^{c(D)}\hat{s}_r)\bigcup(\cup_{r=1}^{c(D)}s_{r r+1\mod c(D)})
\cong S^1$ and the  $s_{ij}$ correspond to the arcs of $K$ which are not visible in $D$, i.e. to the deleted underpasses at each crossing.

\textit{Set-up:} Let $c$ be an arbitrary crossing in $D$.  Label the overstrand at $c$ by $s_k$ and the understrands by $s_i$ and $s_j$, in some order. Pick a small $\varepsilon>0$ in such a way that the ball $B^2_\varepsilon(c)$ in the plane $z=-c(D)-2$ has non-trivial connected intersection with each strand $s_i, s_j, s_k$ and is disjoint from all other strands of $D$.  Consider the infinite cylinder $B^2_\varepsilon(c)\times\mathbb{R}$, where $\mathbb{R}$ denotes the $z$ direction. By construction, this cylinder intersects  $\hat{s}_i$, $\hat{s}_j$ and $\hat{s}_k$ and is disjoint from the lifts of the remaining strands. We will embed an arc $s_{ij}$ into $B^2_\varepsilon(c)\times\mathbb{R}$ in such a way that  $\hat{s}_i\cup s_{ij}\cup \hat{s}_j$ is a continuous arc and the orthogonal projection of  $\hat{s}_k\cup (\hat{s}_i\cup s_{ij}\cup \hat{s}_j)$ to the plane $z=-c(D)-2$ coincides with the corresponding section of $D$. (Informally, $s_{ij}$ will connect $\hat{s}_i$ to $\hat{s}_j$, and it will pass ``under" $\hat{s}_k$.)

\textit{Case 1:} Assume $f_n(s_i) = f_n(s_j)$, that is, at the end of the coloring process, the strands $s_i$ and $s_j$ are assigned the same color. Let $s_{ij}$ be a smooth, monotonically decreasing curve which connects the endpoints of $\hat{s}_i$ and $\hat{s}_j$ that are contained in the cylinder $B^2_\varepsilon(c)\times\mathbb{R}$ and which has the property that $s_{ij}$ itself is contained entirely within the cylinder.  Recall that, by Proposition~\ref{properties}, $h(s_k) > \operatorname{min}\{h(s_i), h(s_j)\}$. This implies that $s_{ij}$ can be chosen so that the orthogonal projection of $\hat{s}_k\cup (\hat{s}_i\cup s_{ij}\cup \hat{s}_j)$ to the plane $z=-c(D)-2$ is the subset of $D$, as desired. (Precisely, for any $\varepsilon_1\in(0, \varepsilon)$, one can guarantee that the intersection of $s_{ij}$ and half-space $z\geq h(s_k)$ is contained entirely outside the cylinder $B^2_{\varepsilon_1}(c)\times\mathbb{R}$.) See Figure~\ref{Case1.fig}.

\begin{figure}
	\includegraphics[width=1in]{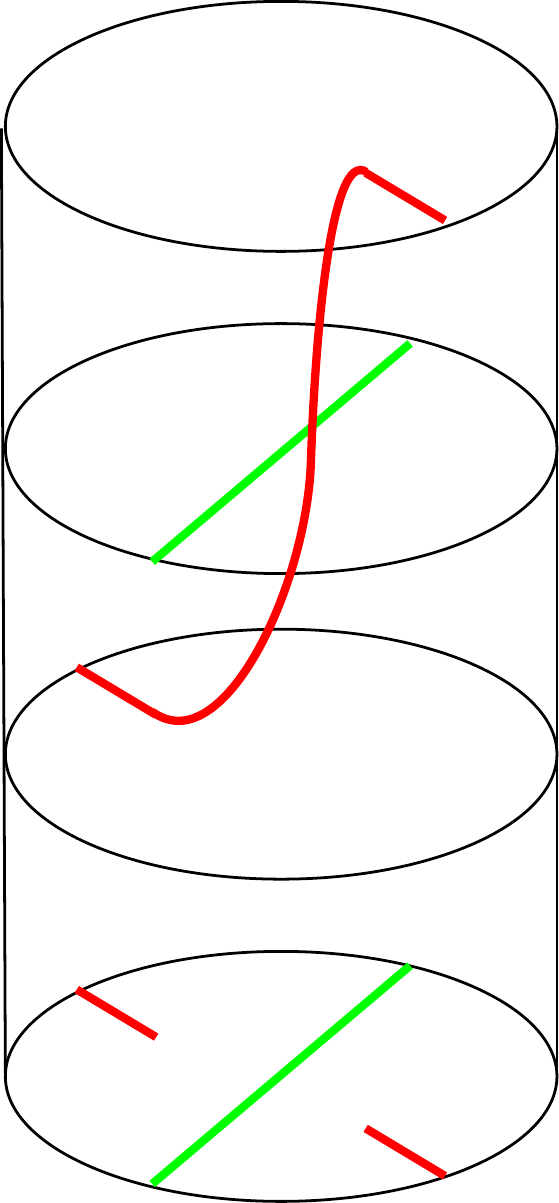}
	\caption{The construction of $s_{ij}$ in Case 1.}
	\label{Case1.fig}
	\end{figure}

\textit{Case 2:} Assume $f_{n}(s_i) \neq f_{n}(s_j)$, that is, at the end of the coloring process, the strands $s_i$ and $s_j$ are assigned distinct colors. Let $x_{ij}$ denote point in $(B^2_\varepsilon(c)\times\mathbb{R})\cap \{z=-c(D)-1\}$ with the property that the orthogonal projection of $x_{ij}$ to the plane $z=-c(D)-2$ coincides with the crossing  $c$ in the diagram. Construct $s_{ij}$ as the union of two smooth, monotonic arcs, contained entirely within $B^2_\varepsilon(c)\times\mathbb{R}$, connecting $x_{ij}$ to those endpoints of  $\hat{s}_i$ and $\hat{s}_j$ which are themselves contained in the cylinder. Because $h(s_k)\geq -c(D)>-c(D)-1$, these two monotonic arcs can be chosen so that the orthogonal projection of $\hat{s}_k\cup (\hat{s}_i\cup s_{ij}\cup \hat{s}_j)$ to the plane $z=-c(D)-2$ is once again a subset of $D$. (Precisely, for any $\varepsilon_1\in(0, \varepsilon)$, one can guarantee that the intersection of $s_{ij}$ and the cylinder $B^2_{\varepsilon_1}(c)\times\mathbb{R}$ is contained in $B^2_{\varepsilon_1}(c)\times[c(D)-1, c(D) - \frac{1}{2}]$.) See Figure \ref{Case2.fig}.

\begin{figure}
	\includegraphics[width=1in]{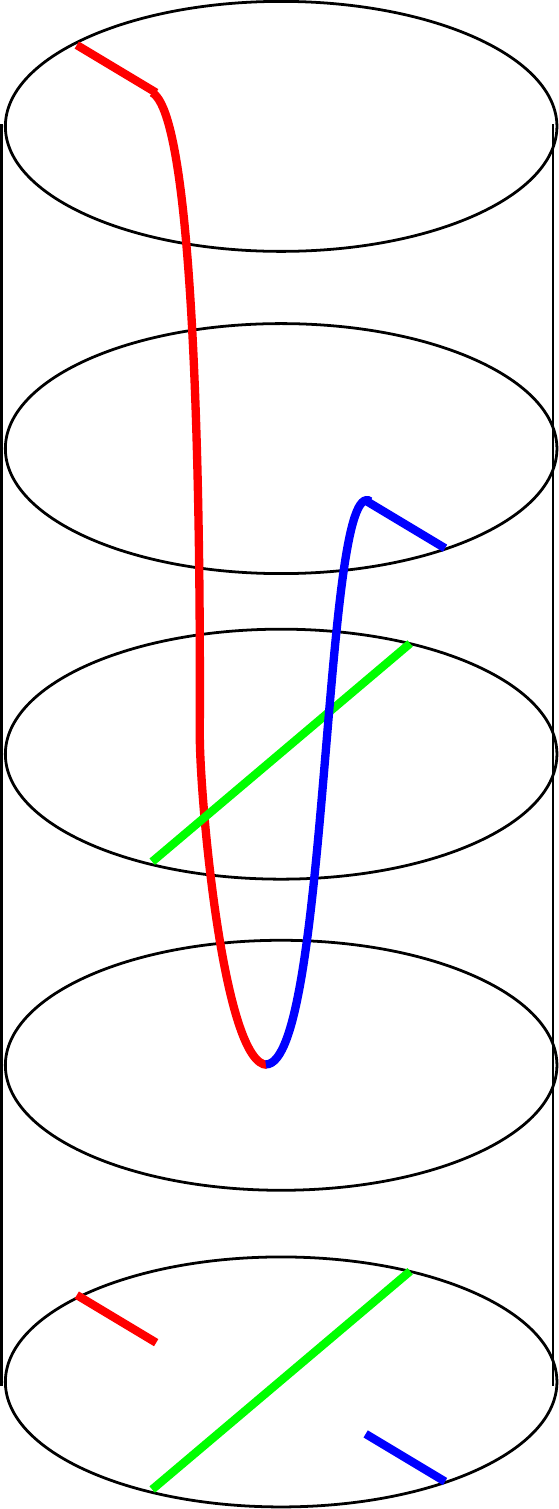}
	\caption{The construction of $s_{ij}$ in Case 2.}
	\label{Case2.fig}
	\end{figure}

In both Case 1 and Case 2, the above construction amounts to a careful way of joining a pair of adjacent strands in the diagram so that the overstrand at the crossing where they meet is preserved. Performing this construction at every crossing of $D$ therefore reconstructs an embedding of $K$. In order to produce from here an embedding with the desired number of local extrema, we perturb each lift  $\hat{s}_i$ to obtain a new lift $\hat{s}_i'$ in the following way. Let $c_{i i+1}$ denote the point in $s_{i i+1}$ that projects to a vertex of $p(K)$. If $h(s_i)$ is not the unique local maximum of $h$ on the set $f_n^{-1}(f_n(s_i))$, we let the subarc of $s_{i-1 i}\cup\hat{s}_i'\cup s_{i i+1}$ from $c_{i-1 i}$ to $c_{i i+1}$ be a smooth monotonic arc, strictly increasing or strictly decreasing as dictated by the values of $h(s_{i-1})$ and $h(s_{i+1})$.  On the other hand, if $h(s_i)$ is the unique local maximum of $h$ on the set $f_n^{-1}(f_n(s_i))$, we let the subarc of $s_{i-1 i}\cup\hat{s}_i'\cup s_{i i+1}$ from $c_{i-1 i}$ to $c_{i i+1}$ be a smooth arc increasing monotonically to the midpoint of $\hat{s}_i'$ and decreasing monotonically thereafter.

This construction produces a smooth embedding of $K$ in $\mathbb{R}^3$ with exactly $k$ local maxima, corresponding to the seed strands in each color. The local minima correspond to the points $x_{ij}$, and thus project to those crossings in $D$ at which the diagram changes color.

{\bf Step II.}  Now let $L$ be a link of $N>1$ components, and assume that $\omega(L')=\beta(L')$ for all links $L'$ of fewer than $N$ components. First consider the case that $L$ has a diagram with $\omega(D)=\omega(L)$ and such that $D$ is not cut-split. In this situation, Proposition~\ref{propertieslinks} applies. A minor adaptation of the proof given in Part~I will establish that $\omega(L)=\beta(L)$. We adopt the identical setup and begin by re-examining the cases.

\textit{Case 1:} Assume $f_n(s_i) = f_n(s_j)$ and $h(s_k) > \operatorname{min}\{h(s_i), h(s_j)\}$. Construct $s_{ij}$ exactly as in Case~1 of Step~I.  

\textit{Case 2:} Assume $f_{n}(s_i) \neq f_{n}(s_j)$. Construct $s_{ij}$ exactly as in Case~2 of Step~I. 

\textit{Case 3:} Assume $f_n(s_i) = f_n(s_j)$ and $h(s_k) \leq \operatorname{min}\{h(s_i), h(s_j)\}$. By Proposition~\ref{propertieslinks}, the set $\{f^{-1}_n(f_n(s_i))\}$ corresponds to the projection of a single component $U$ of $L$ and $c$ is the unique crossing incident to $p(U)$ with the property that $h(s_k) \leq \operatorname{min}\{h(s_i), h(s_j)\}$. Then, exactly as in Case 2 of Proposition \ref{main}, we construct $s_{ij}$ as the union of two smooth, monotonic arcs, connecting $x_{ij}$ to endpoints of  $\hat{s}_i$ and $\hat{s}_j$. Moreover, these two monotonic arcs can be chosen so that the orthogonal projection of $\hat{s}_k\cup (\hat{s}_i\cup s_{ij}\cup \hat{s}_j)$ to the plane $z=-c(D)-2$ is once again a subset of $D$. Note that $U$ is monochromatic, and the arc $s_{ij}$ contains the unique local minimum in this color of the constructed embedding.

Performing the above construction at every crossing of $D$ reconstructs an embedding of $L$. As in Step I, we can perturb this embedding slightly to produce a smooth embedding of $L$ in $\mathbb{R}^3$ with exactly $\omega(D)$ local maxima, corresponding to the seed strands in each color. This completes the proof of the proposition in the case when $L$ is a link with a diagram $D$ that is not cut-split and with $\omega(D)=\omega(L)$.

Now allow $L$ to be an arbitrary link of $N$ components and let $D$ be a diagram of $L$ such that $\omega(L)=\omega(D)$. If $D$ is not cut-split, then $\omega(L)=\beta(L)$ by our previous argument. Hence, we can assume both $D$ and $L$ are cut-split with splitting component $U$. Recall that, by Remark~\ref{shortcut}, $\beta(L)=\beta(L\setminus U)+1$. Moreover, $U$ is the splitting component of $L$ that projects to a self-adjacent strand or a simple closed curve in $s(D)$. By Remark~\ref{shortcut},  if $D'$ is the the natural diagram of $L\setminus U$ corresponding to the removal of the self-adjacent strand or simple closed curve from $D$, then $\omega(D)=\omega(D')+1$. Hence, $\omega(L\setminus U)\leq \omega(D)-1=\omega(L)-1$. By the induction hypothesis, we have $\omega(L\setminus U)=\beta(L\setminus U)=\beta(L)-1$. Thus, $\beta(L)-1= \omega(L\setminus U)\leq \omega(L)-1$. Since $\omega(L)\leq \beta(L)$ for all $L$, it follows that $\beta(L)=\omega(L)$, completing the proof.

\end{proof}

\section{Applications and further questions}
\label{app}

We begin with the proofs of Corollary~\ref{twist} and Theorem~\ref{twist2}, which relate the bridge number to the twist number of links and the hyperbolic volume of prime alternating links. Subsequently, we discuss applications of Theorem~\ref{main} to the tabulation of bridge number. 

\begin{proof}[Proof of Corollary~\ref{twist}]

Given a link $L$, it follows from the definition of $t(L)$ that $L$ admits a diagram $D$ with exactly $t(L)$ twist regions. These twist-regions are connected via  $2t(L)$ strands to form $D$. In other words, there are at most $2t(L)$ strands in $D$ which are not properly contained in a twist region. Declare each of these $2t$ strands to be a seed strand. This defines a $2t(L)$-partial coloring of $D$ with the property that all four strands of $D$ incident to the boundary of any given twist region of $D$ have received a color. Recall that a twist region constitutes either a single crossing or a collection of bigons. Therefore, the coloring move by definition allows us to extend the coloring of strands incident to the boundary of a  twist region across the entire region.  It follows that $\beta(L)=\omega(L)\leq\omega(D)\leq 2t(L)$, as claimed. 
\end{proof}

\begin{proof}[Proof of Theorem~\ref{twist2}] Let $L$ be a prime alternating link and let $D$ be a reduced alternating diagram for $L$. By~\cite{L04}, $\frac{1}{2}v_3(t(D)-2)\leq vol(L)$, where $v_3$ is the volume of a regular hyperbolic ideal 3-simplex. By Corollary~\ref{twist}, $\beta(L)\leq 2 t(L)$. Hence, $\frac{1}{2}v_3(\frac{1}{2}\beta(L)-2)\leq vol(L)$. In order to eliminate the constant term in the previous inequality, we note that Cao and Meyerhoff have shown that the minimum volume of any hyperbolic knot is $2v_3$ \cite{CM01}. Hence, we can set $C=\frac{1}{6}v_3$ to insure that $C\beta(L)\leq max(\frac{1}{2}v_3(\frac{1}{2}\beta(L)-2), 2v_3)\leq vol(L)$ for all values of $\beta(L)$.
\end{proof}

The Wirtinger number can also be used to compute bridge numbers of links.  Since for any diagram $D$ of a link $L$ we have $\omega(D)\geq \omega(L)=\beta(L)$, the Wirtinger number provides an approach to calculating upper bounds on the bridge number of $L$.
Moreover, as previously noted, the $\omega(D)$ for a given diagram $D$ is readily computed; for knot diagrams, this can be done via the computer algorithm outlined in the Appendix. The upper bounds obtained in this manner have turned out to be astonishingly strong. At the start of this project, according to KnotInfo~\cite{knotinfo}, bridge numbers were tabulated for prime knots up to and including 11 crossings. By comparing these bridge numbers to the Wirtinger numbers of minimal diagrams, we verified that for all prime knots with up to 11 crossings the upper bounds on $\beta$ obtained by computing $\omega(D)$ for representative minimal diagrams are sharp. 

We computed Wirtinger numbers for minimal diagrams of all 12-, 13- and 14- crossing knots as well. The number of knots among them whose minimal diagrams have Wirtinger number 2 coincided exactly with the number of two-bridge knots of 12, 13 and 14 crossings~\cite{de20072}. Therefore, our calculations identify all two-bridge knots in this range. It also follows that all diagrams $D$ with $\omega(D)=3$ represent three-bridge knots. To complete the tabulation of bridge number for prime 12-crossing knots, we calculated that all such knots have Wirtinger number at most 4, and we checked that the knots whose minimal diagrams have Wirtinger number 4 are not three-bridge. This was done by hand using methods of Jang~\cite{jang2011classification}. (We believe that the same method would allow us to complete the tabulation of bridge number for knots of 13 crossings as well.) Altogether, our computations so far have newly determined the bridge number of approximately 50,~000  prime knots of less than 15 crossings. In addition, as a corollary of these computations, we have verified that for all prime knots of less than 13 crossings, the Wirtinger number of some minimal diagram realizes the bridge number.  We propose the following:

\begin{quest}
\label{minimal}
{\it(Property M)} For which links $L$ is $\omega(L)$ realized in a minimum-crossing diagram of $L$?
\end{quest}

Our calculations show that all prime knots of up to and including 12 crossings have Property M. We conjecture that all prime 13-crossing knots do as well, and that the Wirtinger numbers found equal the bridge numbers.  Furthermore, by taking connected sums of two-bridge knots, one can construct families of knots which have Property M and whose crossing number and bridge number are unbounded. The question of completely characterizing knots with Property M remains open.

Let us now turn an eye back to the Meridional Rank Conjecture. The main theorem of this paper reduces the Conjecture to the following:

\begin{quest}
\label{reduce}
Does every link admit a minimal meridional presentation in which all relations arise as iterated Wirtinger relations in a diagram?
\end{quest}

A positive answer to this question for a class of links would mean that $\mu(L)= \omega(L)$, which, together with our result $\beta(L)=\omega(L)$, would imply the conjecture for these links. In particular, our point of view casts the Meridional Rank Conjecture as a question about the type of relations in a meridional presentation.

\section{Appendix: Computing $\omega(D)$}

We sketch the algorithm by which we obtained the computational results discussed previously. From now on we work only with knots. Furthermore, we make the following simplifying assumption. Note that coloring a knot diagram $D$ in several colors allowed us to study the combinatorics of the coloring process, which in turn enabled us to count the number of local maxima in the knot embedding we reconstructed from $D$. This analysis is a bit more subtle than what we need if we are merely asking whether a set $A$ of meridional elements generates the knot group via iterated application of the Wirtinger relations in $D$. Therefore, for the purpose of  calculating $\omega(D)$, we do not keep track of the different colors. Instead, we simply ask if a given partial coloring of $D$ can be extended to all of $D$. (Formally, we compose the function $f: A \to \{1, 2, \dots, k\}$ with the constant function $c:  \{1, 2, \dots, k\}\to \{1\}$, then we define the coloring move as before.) The algorithm can be broken down into three steps.
\begin{enumerate}
\item From the Gauss code of a non-trivial knot diagram $D$, extract information about which strands are over- and under-strands at every crossing of $D$.
\item Given a subset $A$ of set of strands $s(D)$, determine if choosing the strands in $A$ as seeds would allow the entire diagram to be colored by iterating the coloring move. 
\item Running across all subsets of size $k\geq 2$ of $s(D)$, determine if $D$ admits a Wirtinger generating system of size $k$. The algorithm terminates as soon as the first valid coloring occurs.
\end{enumerate}

Now we describe in some detail how these steps are performed. %\\

{\it (1) Creating a knot dictionary.} Let $K$ be a knot with diagram $D$.  By convention, we label the strands of $D$ by letters.  Represent each crossing of $D$ by the (unordered) tuple $(a, b)$, where $a$ and $b$ are the understrands at that crossing.  The \textit{knot dictionary} $D_K$ is a map which assigns to each element $c$ of $s(D)$ a subset of the crossings of $D$. The map is given by $D_K(c) = \{ (a, b)\mid c \text{ is the overstrand of }(a, b)\}$.In terms of data structures, the knot dictionary is a map whose keys are the strands of the knot diagram and whose values are subsets of $v(D)$.
\begin{example} The trefoil has knot dictionary $D_{3_1} =\{a \to \{(b, c)\}, b \to \{(a, c)\}, c \to \{(a, b)\}\}$.
\end{example}
We can derive the knot dictionary $D_K$ of a knot $K$ by examining its Gauss code $G_K$, using a function we call $\proc{knot-dictionary}(G_K)$. To illustrate how this function works, we return to the diagram of the trefoil, which has Gauss code $G_{3_1} = [-1, 3, -2, 1, -3, 2]$.  Since the negative numbers in the Gauss code correspond to a strand going under a crossing, we see that each strand is described by a subsequence of $G_{3_1}$ beginning and ending with a negative number (``wrapping around" the sequence if needed).  Since there are three strands in this diagram of the trefoil, there are three corresponding subsequences of $G_{3_1}$, which we have labeled to be consistent with the knot dictionary representation in the example above.  These three subsequences are $a = [-1, 3, -2], b = [-2, 1, -3]$, and $c = [-3, 2, -1]$.

Once we have determined which subsequences correspond to which strands, we next determine the crossings at which they are overstrands. We do this by examining the positive integers in each subsequence.  For example, since $a = [-1, 3, -2]$, the strand labeled $a$ is the overstrand at the crossing labeled $3$.  Then, since $b$ and $c$ contain $-3$, this indicates that they are under this same crossing and are thus the two strands under strand $a$ at crossing $3$.  We then assign the tuple $(b, c)$ to $a$.  Since $a$ contains no more positive integers, we have found all the crossings $a$ is over and have completed the knot dictionary entry for $a$, which is $D_{3_1}(a) \to \{(b, c)\}$, as in our above example.  Repeating this process for the remaining subsequences results in the same knot dictionary as in our above example: $D_{3_1} =\{a \to \{(b, c)\}, b \to \{(a, c)\}, c \to \{(a, b)\}\}$. %\\

{\it (2) Extending a partial coloring.}
Once we have a knot dictionary $D_K$, we can determine whether a given set of seed strands $A$ leads to a coloring of every strand in the diagram. We do this using a function called $\proc{color}$. Consider a crossing $(p, q)$ of the diagram and assume that $p$ is not colored. The partial coloring can be extended at this crossing if and only if both $q$ and the overstrand are colored. Running through the list of crossings of $D$ in any order allows us to determine if a coloring move can be performed.

The function $\proc{color}$ works as follows. Make a copy, $C$, of the seed strands $A$, and iterate through the keys of $D_K$ that are in $C$.  For each of these keys in $C$, say $a$, examine each crossing in $D_K(a)$. For each $(b, c)\in D_K(a)$,  if $C$ contains either of $b$ or $c$, add the other one to $C$.  Repeat this step until either all the strands of $D$ are added to $C$ (in which case, $A$ has been shown to be a Wirtinger generating system) or one entire iteration through all $a\in C$ and all $(b, c)\in D_K(a)$ is completed without adding new strands to $C$ (in which case, $A$ has been shown to not be a Wirtinger generating system).  %\\

{\it (3) Finding a minimal coloring.}
We define a function $\proc{calculate-w}$ which determines the Wirtinger number for a knot diagram $D$. Given Gauss code $G_K$, we first call $\proc{knot-dictionary}(G_K)$ to create the corresponding knot dictionary $D_K$.  Then, for $n$ ranging from 1 to $D_K.size$ (the number of keys in $D_K$, i.e., the number of strands in the knot diagram), we repeat the following: we call $\proc{combinations}(D_K.keys, n)$, which returns $X$, the set of all combinations of $n$ strands; then, for each set of seed strands $A\in X$, we call $\proc{color}(D_K, A)$.  If $\proc{color}(D_K, A)$ results in coloring the entire diagram, we return $A.size$, the number of strands in $A$.  Otherwise, we pick a new set of seed strands $A'$ from $X$ and repeat this process.  If none of the combinations in $X$ lead to a complete coloring, we increment $n$ and repeat the process until such a combination is found.  Note that every non-trivial knot diagram is colorable by $c(D) - 1$ strands, so this algorithm is guaranteed to return $D_K.size - 1$ in the worst case. %\\

Since the function  $\proc{color}$ is applied to every subset of $s(D)$ of a given size $k\leq\omega(D)$, the algorithm runs in factorial time. However,  $\omega(D)<<c(D)$ in general, and the algorithm terminates when the first valid coloring occurs. As a result,  the running time is short in practice. Computing the Wirtinger numbers of all  diagrams in the Knot Table of up to 14 crossings took approximately 10 minutes on a weak fashionable laptop. That said, it is evident that the algorithm performs many redundant checks, and its efficiency can definitely be improved, should the running time increase unreasonably with $c(D)$.  We also remark that the above procedure for calculating $\omega(D)$ can be extended to link diagrams, by implementing a few modifications to handle Gauss code for multiple-component links.

\section*{Acknowledgement}
The authors would like to thank Michel Boileau for suggesting the name {\it Wirtinger number}.

\nocite{RZ87}
 \bibliographystyle{plain}
\bibliography{wirtinger}

\end{document}